\newtheorem{theorem}{Theorem}[section]
\newtheorem{lemma}[theorem]{Lemma}
\newtheorem{corollary}[theorem]{Corollary}
\theoremstyle{definition}
\newtheorem{definition}[theorem]{Definition}
\newtheorem{example}[theorem]{Example}
\newtheorem{remark}[theorem]{Remark}
\numberwithin{equation}{section}
\newcommand{\C}{\mathbb{C}}
\begin{document}
	
\title[Regularity of the Semigroup of Regular Probability Measures]{Regularity of the Semigroup of Regular Probability Measures on Locally Compact Hausdorff Topological Groups in which every element is of finite order}
% Information for the first author
\author{M. N. N. Namboodiri}
\address{Formerly of Department of Mathematics, Cochin University of Science \& Technology, Kochi, Kerala, India-682022\\ \&\\ IMRT Thiruvananthapuram.}
\email{mnnadri@gmail.com}

\thanks{The KSCSTE Emeritus Scheme supports the author.}
% General info
\subjclass[2000]{Primary 60B15}

\date{ABC XX, XXXX, and, in revised form, XYZ XX, XXXX.}

\keywords{Measures, Semigroup, Convolution}

	\begin{abstract}
		Let $\mathcal{G}$ be a locally compact Hausdorff group in which every element is of finite order, and let $P(\mathcal{G})$ denote the class of all regular probability measures on $\mathcal{G}$.  In this note, it is observed that a characterization of algebraically regular elements in certain subsemigroups of $P(\mathcal{G})$ (Theorem 4.1  \cite{MNN}) for compact $\mathcal{G}$ remains true for locally compact $\mathcal{G}$. In addition, a complete description of algebraically regular elements in $P(\mathcal{G})$ has been established when $\mathcal{G}$ is countable or uncountable where every proper subgroup is countable. In this case the standing assumption that every element is of finite order is not required. For compact Lie groups, Fourier transform techniques are also used to get more information on $P(\mathcal{G})$. Several concrete examples are provided to illustrate the observations.
		
	\end{abstract}
	\maketitle
\section{Introduction}
Let $\mathcal{G}$ be a locally compact Hausdorff group, and let $P(\mathcal{G})$ denote the set of all regular probability measures on $\mathcal{G}$. It is well-known that $P(\mathcal{G})$ is a semigroup under convolution, and it is abelian if and only if $\mathcal{G}$ is abelian. Furthermore, $P(\mathcal{G})$ is a compact, convex set under the weak topology of measures. This topological semigroup has been intensively studied by many researchers over the years. For example, refer to the classic monographs by Parthasarathy \cite{KRP}, Grenander \cite{UFG1}, and Mukherjea and Tserpes \cite{MNT}.

Of particular relevance to this paper, Wendel \cite{WEN} established many significant results regarding the algebraic, topological, and geometrical structures of $P(\mathcal{G})$ when $\mathcal{G}$ is compact. In particular, he showed that it is a closed, convex semigroup, which is not a group except when $\mathcal{G} = \{e\}$, the trivial group, by demonstrating that the only invertible elements are the point-mass measures. The problem we consider is the algebraic regularity of $P(\mathcal{G})$. A semigroup is algebraically regular if every element has a generalized inverse.
       The following theorem was proved in \cite {MNN} recently by the author:
       
       		\begin{theorem}
       	Let $\mathcal{G}$ be a  compact Hausdorff topological group such that every element in $\mathcal{G}$ is of finite order. Let
       	$M(\mathcal{G}) = \text{Conv}\{\delta_{g}: g \in \mathcal{G}\}$,
       	where 'Conv' denotes the convex hull. For an arbitrary finite set $\{g_{1}, g_{2}, ..., g_{n}\} \subset G$, then
       	\begin{equation}
       		\mu = \frac{\delta_{e} + \delta_{g_{1}} + \delta_{g_{2}} + ... + \delta_{g_{n}}}{n + 1},
       	\end{equation}
       	is regular in $M(\mathcal{G})$.Moreover, the elements of the following form namely,
       	\begin{equation}
       		\{\delta_{g} \star \mu \star \delta_{h}: g, h \in G \quad \& \quad \mu = \frac{\delta_{e} + \delta_{g_{1}} + \delta_{g_{2}} + ... + \delta_{g_{n}}}{n + 1}\}.
       	\end{equation}
       	are also regular in $M(\mathcal{G})$.
       \end{theorem}.
    The purpose of this note is to show that the theorem is valid for locally compact ,Hausdorff topological groups in which every element is of finite order. 
    
   From now onwards $\mathcal{G}$ will denote  $\mathbf{a \quad locally  \quad compact \quad Hausdorff \quad topological \quad group}$ such that every element in it is of  $\mathbf{finite order}$. Special cases shall be mentioned in addition.

\section{Preliminaries}\label{sec2}
Let  $\mathcal{B}$ denote the $\sigma$-algebra of all Borel sets in $\mathcal{G}$. A probability measure $\mu$ is a nonnegative countably additive function on $\mathcal{B}$ such that the total mass $\mu(\mathcal{G}) = 1$. A point mass measure or Dirac delta measure is a measure $\mu$ for which there is an element $x \in \mathcal{G}$ such that $\mu(A) = 1$ if $x \in A$ and zero otherwise, $A \in \mathcal{B}$. Such a measure is usually denoted by $\delta_{x}$. One of the interesting results of Wendel is that the only invertible elements in $P(\mathcal{G})$ are Dirac delta measures.

The product convolution $\star$ in $P(\mathcal{G})$ is  defined as follows:

\begin{definition}(\textbf{Convolution})
	Let $\mu, \nu \in P(\mathcal{G})$. Then $\mu \star \nu$ is the probability measure defined as $\mu \star \nu (A) = \int \mu(Ax^{-1}) \,d\nu(x)$
	for every $A \in \mathcal{B}$.
\end{definition}

\begin{definition}(\textbf{Generalized Inverse})
	Let $\mathcal{S}$ be a semigroup, and let $s \in \mathcal{S}$. An element $s^{\dagger} \in \mathcal{S}$ is called a generalized inverse of $s$ if $ss^{\dagger}s = s$.
\end{definition}

	\begin{remark}
		In a general semigroup $\Omega$, if $\omega\in \Omega$ has a generalized inverse, then it has a Moore-Penrose inverse: to be explicit, if $\omega g \omega=g $ for some $g\in \Omega$, then 
		\begin{equation}
			\omega \omega^{\dagger} \omega=\omega  \quad \& ,
		\end{equation}
		\begin{equation}
			\omega^{\dagger}\omega \omega^{\dagger}=\omega^{\dagger},
		\end{equation}
		where $\omega^{\dagger}=g \omega g$. So to characterize generalized invertibility, it will be enough to characterize Moore-Penrose invertibility. 
	\end{remark}
	The following lemma is proved in \cite{MNN}.
	\begin{lemma}\cite{MNN}\label{lem9}
		Let $\mu$ be a regular element in $P(\mathcal{G})$ with Moore-Penrose inverse $\mu^{\dagger}$. Suppose $e\in S(\mu)$ and every element of $S(\mu)$ is of finite order. Then $S(\mu)$ is a subgroup, and $S(\mu)= S(\mu)^{\dagger}$. 
	\end{lemma}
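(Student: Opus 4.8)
The plan is to convert the two Moore--Penrose identities into set identities for the supports, using the standard rule $S(\alpha\star\beta)=\overline{S(\alpha)\,S(\beta)}$. Write $A=S(\mu)$ and $B=S(\mu^{\dagger})$ (the latter being the set denoted $S(\mu)^{\dagger}$ in the statement). Then $\mu\star\mu^{\dagger}\star\mu=\mu$ and $\mu^{\dagger}\star\mu\star\mu^{\dagger}=\mu^{\dagger}$ yield $\overline{ABA}=A$ and $\overline{BAB}=B$. I would first record that $\mu\star\mu^{\dagger}$ and $\mu^{\dagger}\star\mu$ are idempotents in $P(G)$, since for instance $(\mu\star\mu^{\dagger})\star(\mu\star\mu^{\dagger})=(\mu\star\mu^{\dagger}\star\mu)\star\mu^{\dagger}=\mu\star\mu^{\dagger}$. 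Invoking the classical description of idempotent probability measures on a locally compact group as the normalized Haar measures of compact subgroups, I obtain compact subgroups $K_{1}=S(\mu\star\mu^{\dagger})=\overline{AB}$ and $K_{2}=S(\mu^{\dagger}\star\mu)=\overline{BA}$.

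Next I would extract invariance relations. Rewriting $\mu\star\mu^{\dagger}\star\mu=\mu$ as $(\mu\star\mu^{\dagger})\star\mu=\mu$ and as $\mu\star(\mu^{\dagger}\star\mu)=\mu$, and doing the symmetric bracketing for $\mu^{\dagger}$, passage to supports gives $\overline{K_{1}A}=A$, $\overline{AK_{2}}=A$, $\overline{K_{2}B}=B$, and $\overline{BK_{1}}=B$. Since $K_{1},K_{2}$ are compact and $A,B$ are closed, and the product of a compact set with a closed set is closed, these closures may be dropped, giving the exact equalities $K_{1}A=A$, $AK_{2}=A$, $K_{2}B=B$, $BK_{1}=B$. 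The hypothesis $e\in A$ then gives $B=\overline{\{e\}B}\subseteq\overline{AB}=K_{1}$ and $B=\overline{B\{e\}}\subseteq\overline{BA}=K_{2}$, so $B\subseteq K_{1}\cap K_{2}$.

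The decisive step is a coset--collapse argument. Fix $b\in B$. Since $b\in K_{1}$ and $K_{1}$ is a subgroup, $bK_{1}=K_{1}$; on the other hand $bK_{1}\subseteq BK_{1}=B$, so $K_{1}\subseteq B$, and together with $B\subseteq K_{1}$ this forces $B=K_{1}$. The mirror argument, using $K_{2}B=B$ and $B\subseteq K_{2}$, gives $B=K_{2}$; hence $K_{1}=K_{2}=B$, a common compact subgroup I call $K$. Finally I pin down $A$: from $K_{1}A=A$ and $e\in A$ I get $K=K\{e\}\subseteq K_{1}A=A$, so $K\subseteq A$; and from $\overline{AB}=K_{1}=K$ with $B=K$ I get $AK=K$ (again the product is closed), whence for every $a\in A$ one has $a=ae\in aK\subseteq AK=K$, i.e. $A\subseteq K$. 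Therefore $A=K=B$, which is exactly the assertion that $S(\mu)$ is a (compact) subgroup and that $S(\mu)=S(\mu^{\dagger})$.

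The main obstacle I anticipate is the structural input in the locally compact setting: in the compact case $\overline{AB}$ is automatically a compact subset of $G$ and a closed subsemigroup, hence a compact subgroup for free, whereas for locally compact $G$ one must genuinely rely on the idempotent--measure theorem (or, alternatively, exploit the finite--order hypothesis on $S(\mu)$ to force the relevant closed subsemigroups to be groups) in order to guarantee that $K_{1},K_{2}$ are compact subgroups. Beyond that the argument is bookkeeping with supports, the only recurring subtlety being the fact that a compact-by-closed product is closed, which is precisely what lets me replace each closure $\overline{K_{1}A}$, $\overline{AK}$, and the like by the plain set product.
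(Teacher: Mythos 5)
The first thing to note is that this paper contains no proof of Lemma \ref{lem9} for you to be compared against: the lemma is simply quoted from \cite{MNN} (``The following lemma is proved in \cite{MNN}''), so your argument can only be judged on its own terms. On those terms it is correct. The chain you build is sound: $S(\alpha\star\beta)=\overline{S(\alpha)S(\beta)}$ holds for regular probability measures on a locally compact Hausdorff group; $\mu\star\mu^{\dagger}$ and $\mu^{\dagger}\star\mu$ are idempotent; the Kawada--It\^{o}/Wendel theorem (valid in the locally compact setting, see e.g.\ Heyer or Mukherjea--Tserpes \cite{MNT}) makes their supports $K_{1}=\overline{AB}$ and $K_{2}=\overline{BA}$ compact subgroups; compact-times-closed products are closed in a topological group, so the closures in $\overline{K_{1}A}=A$, $\overline{AK_{2}}=A$, $\overline{K_{2}B}=B$, $\overline{BK_{1}}=B$ can indeed be dropped; and the coset-collapse step ($b\in B\subseteq K_{1}$ gives $K_{1}=bK_{1}\subseteq BK_{1}=B$, and symmetrically for $K_{2}$) together with $e\in A$ pins down $A=B=K_{1}=K_{2}$. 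One substantive observation: your proof never uses the hypothesis that every element of $S(\mu)$ has finite order --- you prove the stronger statement that $e\in S(\mu)$ alone forces $S(\mu)$ to be a compact subgroup equal to $S(\mu^{\dagger})$. That hypothesis is most plausibly what the author's own argument in \cite{MNN} exploits (for instance, a closed subsemigroup all of whose elements have finite order is automatically closed under inversion, which lets one avoid or localize the idempotent-measure theorem), so your route is likely genuinely different: you outsource the entire group structure to the idempotent-measure theorem, at the price of invoking that classical result, and in exchange you get a cleaner statement with a weaker hypothesis. Also note that in the setting where this lemma is actually applied in Section 3 the supports are finite, so all of your closure manipulations become trivial there; your proof, however, does not need that restriction.
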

	\section{Geometric Characterization of Generalised Invertible Elements in $P(\mathcal{G})$}
	
In the subsequent portion of this article, it is assumed that the supports of the measures, and consequently their products, are finite. Therefore, the closures taken will remain unchanged. This simplifies the arguments in the subsequent proofs. The following theorem is locally compact version of theorem 4.1,\cite{MNN}. The proof of it exactly the same as that of the above mentioned theorem.
	\begin{theorem}
		Let $\mathcal{G}$ be a locally compact, Hausdorff  topological group such that every element in $\mathcal{G}$ is of finite order. Let
		\begin{equation}
			M(G) = \text{Conv}\{\delta_{g}: g \in G\},
		\end{equation}
		where 'Conv' denotes the convex hull. For an arbitrary finite set $\{g_{1}, g_{2}, ..., g_{n}\} \subset \mathcal{G}$, then
		\begin{equation}
			\mu = \frac{\delta_{e} + \delta_{g_{1}} + \delta_{g_{2}} + ... + \delta_{g_{n}}}{n + 1},
		\end{equation}
		is regular in $M(\mathcal{G})$.Moreover, the elements of the following form namely,
		\begin{equation}
			\{\delta_{g} \star \mu \star \delta_{h}: g, h \in \mathcal{G} \quad \& \quad \mu = \frac{\delta_{e} + \delta_{g_{1}} + \delta_{g_{2}} + ... + \delta_{g_{n}}}{n + 1}\}.
		\end{equation}
	are also regular in $M(\mathcal{G})$.
	\end{theorem}
	
	\begin{proof}
		Of course, one can directly prove that $\mu$ is regular by brutal computation. However, our main interest being the characterization of regular elements, we give a systematic way of arriving at regular elements, $\mu$ being one of them. Also , it is to be noted that the proof is exactly same word by word as that of Theorem 4.1,\cite{MNN}. However, the details are repeated for the sake of completeness.To start with, we assume that
		\begin{equation}
			\mu = \Sigma_{k=0}^{n} \alpha_{k} \delta_{g_{k}}, \quad \alpha_{k} > 0, \quad \& \quad \Sigma_{k=0}^{n} \alpha_{k} = 1.
		\end{equation}
		
		First, we show that if $\mu^{\dagger} = \Sigma_{j=1}^{m} \beta_{j} \delta_{h_{j}}$ is the Moore-Penrose inverse, it implies that $\{h_{j}, j=1,2,...m.\} = \{g_{j}, j=1,2,...n\}$.
		
		If $\gamma$ is a generalized inverse of $\mu$, then we will have
		\begin{equation}
			\mu \star \mu^{\dagger} \star \mu = \mu, \quad \& 
		\end{equation}
		\begin{equation}
			\mu^{\dagger} \star \mu \star \mu^{\dagger} = \mu^{\dagger}.
		\end{equation}
		
		Now by Lemma \ref{lem9}, we find that
		\begin{equation}
			S(\mu) = S(\mu^{\dagger}) = \{g_{k}: k=0,1,2,...,n\},
		\end{equation}
		and $S(\mu)$ is a group. Therefore, we may assume that $\mu^{\dagger}$ is the generalized inverse of $\mu$ implies that
		\begin{equation}
			\mu^{\dagger} = \Sigma_{k=0}^{n} \beta_{k} \delta_{g_{k}},
		\end{equation}
		where $\beta_{k} > 0 \quad \& \quad \Sigma_{k=0}^{n} \beta_{k} = 1$.
		
		It is easy to see that
		\begin{equation}
			\mu \star \mu^{\dagger} = \Sigma_{j=0}^{n} \left(\Sigma_{g_{k}g_{l}=g_{j}} \alpha_{k}\beta_{l}\right) \delta_{g_{j}},
		\end{equation}
		and
		\begin{equation}
			\mu \star \mu^{\dagger} \star \mu = [\Sigma_{j=0}^{n} \sigma_{j} \delta_{g_{j}}] \star \mu = \Sigma_{k=0}^{n} \alpha_{k} \delta_{g_{k}},
		\end{equation}
		where $\sigma_{j} = \Sigma_{g_{k}g_{l}=g_{j}} \alpha_{k}\beta_{l}$ for each $j$.
		
		Therefore, we find that
		\begin{equation}
			\Sigma_{j=0}^{n} \left(\Sigma_{g_{k}g_{l}=g_{j}} \sigma_{k}\alpha_{l}\right) \delta_{g_{j}} = \Sigma_{j=0}^{n} \alpha_{j} \delta_{g_{j}}.
		\end{equation}
		
		Therefore, we have that
		\begin{equation}
			\Sigma_{g_{k}g_{l}=g_{j}} \sigma_{k}\alpha_{l} = \alpha_{j},
		\end{equation}
		for every $j$. Therefore, by substituting terms, we get
		\begin{equation}
			\Sigma_{g_{k}g_{l}=g_{j}} \left(\Sigma_{g_{i}g_{l}=g_{k}} \alpha_{i}\beta_{l}\right)\alpha_{l} = \alpha_{j},
		\end{equation}
		for $j=0,1,2,...n.$ Now Lemma \ref{lem9} above implies that $S(\mu)=S(\mu^{\dagger})$ is a group. Therefore, equation $3.12$ can be written as
		\begin{equation}
			\Sigma_{k=0}^{n} \sigma_{k}\alpha_{S_{j}(k)} = \alpha_{j},
		\end{equation}
		and similarly, equation
		\begin{equation}
			\sigma_{j} = \Sigma_{g_{k}g_{l}=g_{j}} \alpha_{k}\beta_{l} \quad for \quad each \quad j,
		\end{equation}
		can be written as
		\begin{equation}
			\Sigma_{k=0}^{n} \alpha_{k} \beta_{S_{j}(k)} = \alpha_{j},
		\end{equation}{\tiny }
		where $S_{j}$ is the permutation on $\{0,1,2,...n.\}$ given by $g^{-1}_{k}g_{j}\rightarrow g_{S_{j}(k)}$, $j,k\in\{0,1,2,...,n\}$. This can again be written as a matrix equation as follows:
		\begin{equation}
			\textbf{A} =
			\begin{bmatrix}
				\alpha_{S_{(0)}(0)}\,\alpha_{S_{(0)}(1)},\quad \ldots \alpha_{S_{(0)}(n)} \\
				\alpha_{S_{(1)}(0)}\,\alpha_{S_{(1)}(1)},\quad \ldots \alpha_{S_{(1)}(n)}\\
				.\\ \quad .\\
				
				. \quad .,           .      \quad .                              \quad       .
				
				\\
				\alpha_{S_{(n)}(0)},\alpha_{S_{(n)}(1)},\quad \ldots \alpha_{S_{()}(n)}
			\end{bmatrix},
		\end{equation}
		and the corresponding equation is as follows:
		\begin{equation}
			\begin{bmatrix}
				\alpha_{S_{(0)}(0)}\,\alpha_{S_{(0)}(1)},\quad \ldots \alpha_{S_{(0)}(n)} \\
				\alpha_{S_{(1)}(0)}\,\alpha_{S_{(1)}(1)},\quad \ldots \alpha_{S_{(1)}(n)}\\
				.\\ \quad .\\
				
				. \quad .,           .      \quad .                              \quad       .
				
				\\
				\alpha_{S_{(n)}(0)},\alpha_{S_{(n)}(1)},\quad \ldots \alpha_{S_{(n)}(n)}
			\end{bmatrix}
			\begin{bmatrix}
				\sigma_{0}\\
				
				\sigma_{1}\\
				
				\vdots\\
				\sigma_{k}\\
				\vdots\\
				
				\sigma_{n}    
			\end{bmatrix}
			=
			\begin{bmatrix}
				\alpha_{0}  \\
				\alpha_{1}\\
				
				\vdots\\
				\alpha_{k}\\
				\vdots\\
				\alpha_{n}
			\end{bmatrix}. 
		\end{equation}
		Now equation $3.13$ can be explicitly written as follows:
		\begin{equation}
			\begin{bmatrix}
				\alpha_{S_{(0)}(0)}\,\alpha_{S_{(0)}(1)},\quad \ldots \alpha_{S_{(0)}(n)} \\
				\alpha_{S_{(1)}(0)}\,\alpha_{S_{(1)}(1)},\quad \ldots \alpha_{S_{(1)}(n)}\\
				.\\ \quad .\\
				
				. \quad .,           .      \quad .                              \quad       .
				
				\\
				\alpha_{S_{(n)}(0)},\alpha_{S_{(n)}(1)},\quad \ldots \alpha_{S_{(n)}(n)}
			\end{bmatrix}
			\begin{bmatrix}
				\beta_{0}\\
				
				\beta_{1}\\
				
				\vdots\\
				\beta_{k}\\
				\vdots\\
				
				\alpha_{n}    
			\end{bmatrix}
			=
			\begin{bmatrix}
				\sigma_{0}  \\
				\sigma_{1}\\
				
				\vdots\\
				\sigma_{k}\\
				\vdots\\
				\sigma_{n}
			\end{bmatrix}. 
		\end{equation}
		Combining equations $3.16 \& 3.17$, we find that the determining equation is as follows:
		\begin{equation}
			\begin{bmatrix}
				\alpha_{S_{(0)}(0)}\,\alpha_{S_{(0)}(1)},\quad \ldots \alpha_{S_{(0)}(n)} \\
				\alpha_{S_{(1)}(0)}\,\alpha_{S_{(1)}(1)},\quad \ldots \alpha_{S_{(1)}(n)}\\
				.\\ \quad .\\
				
				. \quad .,           .      \quad .                              \quad       .
				
				\\
				\alpha_{S_{(n)}(0)},\alpha_{S_{(n)}(1)},\quad \ldots \alpha_{S_{(n)}(n)}
			\end{bmatrix}^{2}
			\begin{bmatrix}
				\beta_{0}\\
				
				\beta_{1}\\
				
				\vdots\\
				\beta_{k}\\
				\vdots\\
				
				\beta_{n}    
			\end{bmatrix}
			=
			\begin{bmatrix}
				\alpha_{0}  \\
				\alpha_{1}\\
				
				\vdots\\
				\alpha_{k}\\
				\vdots\\
				\alpha_{n}
			\end{bmatrix}. 
		\end{equation}
		Now the proof follows immediately from equation $3.20$ above.
	\end{proof}
	\begin{remark}
		We may apply the generalized doubly stochastic invertibility due to Plemons and Clines, Theorem 2 \cite{PLC}, to solve for (probabilistic vector) stochastic solutions of the above matrix equation 3.20; this will provide generalized invertible probability measures we are looking for.
		\end{remark}
		\section{Jonsson-Type Groups}
		We state the following theorem due to Pym,Theorem 4.1, \cite{JSP}. In this section, the locally compact groups $\mathcal{G}$, $\mathbf{neednot }$ satisfy the requirement that every element is of finite order.
		\begin{theorem}
			A positive idempotent measure on a locally compact group is the Haar measure of a compact subgroup, and conversely.
		\end{theorem}
	 \begin{remark}
		It is wellknown that every countable ,locally compact Hausdorff group is necessarily a discrete space and therefore compact subsets of such a topological group is always finite. This can be used to prove the following theorems.
	\end{remark}
 Next we consider a more general case in which the probability measures have countable support.
 \begin{theorem}
 	 Let $P_{\aleph_{0}}(\mathcal{G})$ denote the class of all regular probability measures with countable support and containing the identity $e$. Let $\mu \in P_{\aleph_{0}}(\mathcal{G})$  and if $\mu^{\dagger}$ is the Moore-Penrose inverse of $\mu$,then $S(\mu^{\dagger})$ is a finite subgroup of $G$.
 	 \end{theorem}
  
 	 \begin{proof}
 	 	We have, since $\mu\star \mu^{\dagger}$ as well as $ \mu^{\dagger}\star \mu$ are  idempotents, by Theorem 4.1 we have, $S(\mu\star \mu^{\dagger})$ as well as $S(\mu^{\dagger}\star \mu)$ are finite subgroups. Moreover we have 
 	 	$$
 	 	S(\mu)^{\dagger}S(\mu)S(\mu^{\dagger}) \subset S(\mu^{\dagger}),
 	 	$$
 	 	which implies that 
 	 	$$
 	 	S(\mu)^{\dagger}S(\mu^{\dagger}) \subset S(\mu^{\dagger})
 	 	$$
 	 	since $e\in S(\mu)$. Thus $S(\mu)^{\dagger}$ is a semigroup. By similar reasoning $S\mu^{\dagger} \subset S(\mu\star \mu^{\dagger}) $. Therefore we have $S(\mu)^{\dagger}$ (which is a subset of $S(\mu)$) is a finite semigroup and hence it is a finite group. Similarly we can show that $S(\mu)$ is a finite subgroup that equals $S(\mu^{\dagger})$.
 	 	\end{proof}
 	 	
 	 	\begin{corollary}
 	 				Let $\{ g_{1}, g_{2},...\}$ be an infinite subgroup of $\mathcal{G}$. If $\mu=\Sigma_{n=1}^{\infty} \alpha_{n}\delta_{g_{n}}$, where $\alpha_{j}> 0$ for infinitely many $j$. Then $\mu$ is not generalised invertible.
 	 		\end{corollary}
 	 	\begin{proof}
 	 		Follows immediately from the main Theorem 3.4.
 	 		\end{proof}
  		\begin{remark}
  			Therems 4.2 and its corollory implies that the generalised invertible elements in $P_{\aleph_{0}}(\mathcal{G})$ can be obtained using Perron Frobenious theory as in the earlier case. Thus a complete description of generalised invertible elements in $P_{\aleph_{0}}(\mathcal{G})$ is possible even without the strong condition that every element in $\mathcal{G}$ is of finite order. 
  			\end{remark}
		 \begin{remark}
		 	As mentioned above, one needs to investigate the generalised invertibility of  infinite stochastic matrices. It is not clear whether J.R. Wall or Plemmon's and Clines type results are known for infinite stochastic matrices. So we analyse by taking speciall cases of infinite stochastic matrices to shed light into this seemingly complicated situation.
		 \end{remark}
	\begin{example}
		There are important examples of such groups; for instance, the \textbf{Grigorchuk group}. This group has growth that is faster than polynomial but slower than exponential. Grigorchuk constructed this group in a 1980 paper and proved that it has intermediate growth in 1984 \cite{GRK}. In fact, the above-cited group is a locally compact topological metric group under the word metric, and every element has finite order. However, one must investigate the effect of the property mentioned above and the structure of $P(\mathcal{G})$. Hoever Grigorschuk group is known to be countable.\\
	\end{example}{\cite{GRK}
\begin{example}
	 Jonsson group of cardinality $\aleph_{1}$ equipped with discrete  topology \cite{SHS}  is an example to illustrate Corollary 3.7 above. Therefore , since the supports of idempotent probability measures being compact, all regular elements in this special case have finite support. However, in this group $\mathbf{NOT}$ all elements are of finite order.
	\end{example}
\section{Non commutative Fourier Transform}
In this section, the well-known noncommutative Fourier transform and the corresponding non commutative Bochner's theorem are applied to identify regular elements in $P(\mathcal{G})$ when $\mathcal{G}$ is a compact Lie group.
First we state Bochner's theorem for Fourier transform of probability measures on locally compat ,Hausdorff topological groups. The Fourier transform  $\hat{\mu}$ of a probability measure $\mu \in P(\mathcal{G})$ is defined as follows:
$$
\hat{\mu}(\chi) = \int_{\mathcal{G}} \chi(g) \mu(dg).
$$
for all charecters $\chi$ on $\mathcal{G}$.
\begin{theorem}{Bochner}
	Let $\mathcal{G}$ be a locally compact,Hausdorff  topological group and $F \mathcal{G}\rightarrow \C$ be a function. Then $F$ is the Fourier transform of a probability measure iff and only if $F$ is positive definite ,i.e,
	$$
	\Sigma_{i}\Sigma_{j} \alpha_{i}\alpha_{j}F(g_{i}g^{-1}_{j}) \geq 0
	$$
	for all scalars $\{\alpha_{i},\alpha_{j}\}$ , $F$ is continuous at the identity $e$, and $F(e)=1$
	\end{theorem}
Let $G$ be a locally compact Hausdorff topological group $G$.The unitary dual space 	$\widehat{\mathcal{G}}$ of $\mathcal{G}$ is defined as follows;
\begin{equation}
	\widehat{\mathcal{G}}=\{(\pi,H_{\pi})\},	
\end{equation}
where $\pi:\mathcal{G}\rightarrow B(H_{\pi}),{\pi}$  is unitary, irreducible representation of $\mathcal{G}$ on a complex separable Hilbert space $H_{\pi}$ with the identification by unitary equivalence of representations.	It is also well-known that when $\mathcal{G}$ is compact, each $H_{\pi}$	is finite-dimensional. That means that the dimension  $d_{\pi}$ of $H_{\pi}$ is finite and $d_{\pi}=1$ if $\mathcal{G}$ is abelian.
The \textbf{Fourier transform} of  $\mu \in P(\mathcal{G})$ is defined as the function  $\hat{\mu}:\widehat{\mathcal{G}}\rightarrow B(H_{\pi})$ defined by
\begin{equation}
	\hat{\mu}(\pi)\psi=\int_{\mathcal{G}} \pi(g^{-1})(\psi) \mu(dg),
\end{equation}	
$\pi \in \widehat{\mathcal{G}} \& \psi \in H_{\pi}$.
For a compact Hausdorff  group $\mathcal{G}$ ,let $\mathcal{M}=\cup_{d_{\pi}}	M_{d\pi}(\mathbb{C}).$
A map $\Phi:\widehat{\mathcal{G}} \rightarrow \mathcal{M}(\widehat{\mathcal{G}})$ is called \textbf{Compatible} if  for each $\pi \in \widehat{\mathcal{G}}$, $\Phi(\pi)\in M_{d_{\pi}}(\mathbb{C})$. Here $M_{d_{\pi}}(\mathbb{C})$ denotes the set of all $d_{\pi}\times d_{\pi}$ complex matrices after identifying with $B(H_{\pi})$ for each ${\pi}$.
Recall that the set 
\begin{equation}
	\tilde{S}(G)= \{\gamma:\widehat{\mathcal{G}} \rightarrow \cup_{\pi}M_{d_{\pi}}(\mathbb{C})\},
\end{equation}
is clearly a regular semi-group. Now  set 
\begin{equation}
	\Delta(\mathcal{G})	=\{\gamma:\widehat{\mathcal{G}} \rightarrow \cup_{\pi}M_{d_{\pi}}(\mathbb{C}),\gamma \quad  compatible\}.
\end{equation}
Then we have the following;	
\begin{itemize}	
	\item[{[1]}]
	The problem under investigation is the algebraic regularity of the following topological semi-groups and of finding the maximal regular subsemigroup of  $ \widehat{P(\mathcal{G})}$.
	Since the non-commutative Fourier transform is known to be an isomorphism, it is clear that $\widehat{P(\mathcal{G})}$ is not algebraically regular. Moreover, the following problems are also relevant in this context,
	
	\item[{[2]}]The regularity of the associated semigroups $\tilde{S}(\mathcal{G}) \&\Delta(\mathcal{G})$,
	
	\item[{[3]}] 
	Observe that these semi-groups are related as follows;
	\[
	\widehat{P(\mathcal{G})}	\subset \Delta(\mathcal{G})  \subset \tilde{S}(\mathcal{G}).
	\]
\end{itemize}
\begin{theorem}	Let $\mathcal{G}$ be a compact Lie group. Then $\tilde{S}(\mathcal{G})$ and $\Delta(\mathcal{G})$
	are regular semi-groups.
\end{theorem}
\begin{proof}
	It is well known that $\tilde{S}(\mathcal{G})$ and $\Delta(\mathcal{G})$ are semi-groups. In either case 
	regularity is easy to establish, as shown below.
	Let $\gamma \in \tilde{S}(\mathcal{G})$  (or $\Delta(\mathcal{G})$). For each $\pi \in \hat{\mathcal{G}}$ let
	$\gamma^{\dagger}(\pi)$ be  the Moore-Penrose inverse of $\gamma(\pi).$ Clearly $\gamma^{\dagger}(\pi) \in \mathcal{M}(\hat{\mathcal{G}})$. If $\gamma \in \Delta(\mathcal{G})$ so is $\gamma^{\dagger}$.
\end{proof}
\section{Minimal Regular Semigroups Containing $P(\mathcal{G})$}\label{sec5}
Next we consider the problem whether there are regular semigroups $\widetilde {\widehat{\Delta(\mathcal{G})}}$ such that
\begin{equation}
	\widehat{P(\mathcal{G})}	\subset \widetilde{\widehat{\Delta(\mathcal{G})}} \subset \Delta(\mathcal{G}).
\end{equation}	
We restrict our attention to \textbf {compact Lie groups $\mathcal{G}$} where new techniques  such as \textbf{Log-Ng positivity} \cite{DAM}  are available which is defined as follows:
\begin{definition}
	A compatible function $\gamma:\hat{G}\rightarrow M$ is called \textbf{Lo-Ng positive } if 
	\begin{equation}
		\Sigma_{\pi \in \Omega}	 d_{\pi}tr(\pi(g)\gamma (\pi) B(\pi)) \geq 0,
	\end{equation}	
	whenever 
	\begin{equation}
		\Sigma_{\pi \in \Omega}	 d_{\pi}tr(\pi(g)B(\pi)) \geq 0,
	\end{equation}
	for all $g\in G$ and the summation is taken over alrbitrary finite subsets $\Omega$ of $\widehat{G}$.
\end{definition}
\begin{theorem}(Theorem 4.3.2, The Lo-Ng Criterion\cite{DAM})
	Let $P(\mathcal{G})$ denote the class of regular probability measures on a compact Lie group $G$ and 
	$\gamma:\widehat{\mathcal{G}}\rightarrow \textbf{M}(\mathcal{G})$ be a comptible mapping.Then $\gamma=\hat{\mu}$ if and only if  $\gamma$ is Lo-Ng  positive namely 
	\begin{equation}
		h_{n}(g)=\Sigma_{\pi\in S_{n}}z^{(n)}_{\pi}d_{\pi}	tr(\pi(g)\gamma(\pi)) \geq 0,
	\end{equation}	
	for all $g \in G$,where $ \#(S_{m}),\#(S_{n}) <\infty$ if $m<n$ and $\pi_{0}\in S_{n}$ for all $n$.
\end{theorem}
\begin{remark}
	\begin{itemize}
		\item[{[1]}]The above theorem is a non-commutative analogue of the celebrated Bochkner's theorem:
		Let $\mathcal{G}$ be a locally compact abelian group and $\hat{\mathcal{G}}$ be the dual group of characters. Let $F:\hat{\mathcal{G}}\rightarrow \mathbb{C}$. Then $F$ is the Fourier transform of a measure $\mu,$ ,i.e,
		$$
		\hat{\mu}(\pi)(\psi)=\int_{\mathcal{G}} \pi(g^{-1`})\psi \mu(dg)
		$$
		$\pi \in \hat{\mathcal{G}}$.
		\begin{equation}\nonumber
			\Sigma_{i}\Sigma_{j}\alpha_{i}\alpha_{j} F(x_{i}-x_{j}) \geq 0
			\end{equation}
			 \textrm{ if and only if }  $F(\hat{e})=1$, \,$F$ \textrm{ is continuous at }$\hat{e}$,
		\begin{equation}\nonumber
			\hat{\mu}(\pi)\psi=\int_{G} \pi(g^{-1})\psi \mu(dg),\, \pi \in \hat{G}.
		\end{equation}	
	\end{itemize}
\end{remark}
Now we have the following theorem, which is an easy consequence of Theorem 7.2 above.
\begin{theorem}
	Let $P(G)$ denote the class of regular probability measures on a compact Lie group $G$.Then $\mu \in P(G)$ is regular if and only if $\hat{\mu}^{\dagger}$ is Lo-Ng positive where $\hat{\mu}^{\dagger}(g)$ is the Moore-Penrose  inverse of $\hat{\mu}(g)$ for each $g\in G$.
\end{theorem}
\section{Moore-Penrose of Probability Measures}
We restrict our focus on on $P(\mathcal{G})$, where $\mathcal{G}$ is a compact Lie group. It is well known that the Moore-Penrose inverse of matrices satisfy the following conditions which are as follows.
\begin{definition}
	Let $A$ be a real or complex matrix of finite order $n$. The Moore-Penrose inverse $A^{\nmid}$ satisfy the following conditions:
	\begin{enumerate}
		\item $A^{\nmid}A A^{\nmid}= A^{\nmid}$		
		\item $A A^{\nmid} A= A$\\
		and
		\item
		$AA^{\nmid}$ and $A^{\nmid} A$ are orthogonal projections.
	\end{enumerate}
\end{definition}
The condition (3) above implies the uniqueness of $A^{\dagger}$. Without condition (3)  there will be infinitely many generalised inverses. 
Wendel introduced the following involution $\star$ in $P(\mathcal{G})$:
Let $\mu \in P(\mathcal{G})$. Then the involution $\mu^{\star}$ of $\mu $ is defined as
$$
\mu^{\star}(E)= \mu(E^{-1}), 
$$
for every Borel set $E$ in $G$. The following theorem can be found in \cite{WEN}.
\begin {theorem}
	$\mu \in P(\mathcal{G})$ is an idempotent if and only if
	\begin{enumerate}
		\item $\mu^{2}=\mu$ \\
		\item $\mu^{\star}= \mu$
	\end{enumerate}
\end{theorem}
We conclude this short article with the following remark:
\begin{remark} 
	The above theorem implies that the only genaralised inverse ,if it exists, of a probability measre in $P(\mathcal{G})$ where where $\mathcal{G}$ is a compact Lie group,is the Moore-Penrose inverse.
	\end{remark}
	\nocite{*}
	\section{Declarations}
	
	(1) Ethical Approval : Not applicable.\\
	
	(2) Competing interests: The author declares that there is no financial interest. There is  personal  interest of academic nature only.\\
	
	(3) Authors contribution: Not applicable, being a single author paper.\\
	
	(4) Funding: There is no funding for the work done in this article.\\
	
	(5) Availability of data and materials: Not applicable.\\
	
\bibliographystyle{amsplain}
	\bibliography{Reqularity-2023}	
\end{document}